\newcommand{\email}[1]{\href{mailto:#1}{\nolinkurl{#1}}}
\renewcommand{\leq}{\ensuremath{\leqslant}}
\renewcommand{\geq}{\ensuremath{\geqslant}}
\newcommand{\minimize}[2]{\ensuremath{\underset{\substack{{#1}}}%
{\text{\rm minimize}}\;\;#2 }}
\newcommand{\pair}[2]{\langle{{#1},{#2}}\rangle} 
\newcommand{\scal}[2]{{\left\langle{{#1}\mid{#2}}\right\rangle}}
\newcommand{\menge}[2]{\big\{{#1}~\big |~{#2}\big\}} 
\newcommand{\Menge}[2]{\Big\{{#1}~\Big |~{#2}\Big\}} 
\newcommand{\MEnge}[2]{\Bigg\{{#1}~\Big |~{#2}\Bigg\}}
\newcommand{\BL}{\ensuremath{\EuScript B}\,}
\newcommand{\RX}{\ensuremath{\left]-\infty,+\infty\right]}}
\newcommand{\RXX}{\ensuremath{\left[-\infty,+\infty\right]}}
\newcommand{\HH}{\ensuremath{{\mathcal H}}}
\newcommand{\HHH}{\ensuremath{\boldsymbol{\mathcal H}}}
\newcommand{\XX}{\ensuremath{{\mathcal X}}}
\newcommand{\YY}{\ensuremath{\mathcal{Y}}}
\newcommand{\Sum}{\ensuremath{\displaystyle\sum}}
\newcommand{\emp}{\ensuremath{{\varnothing}}}
\newcommand{\prox}{\ensuremath{\text{\rm prox}}}
\newcommand{\Id}{\ensuremath{\operatorname{Id}}\,}
\newcommand{\RR}{\ensuremath{\mathbb{R}}}
\newcommand{\RP}{\ensuremath{\left[0,+\infty\right[}}
\newcommand{\RPP}{\ensuremath{\left]0,+\infty\right[}}
\newcommand{\NN}{\ensuremath{\mathbb N}}
\newcommand{\KK}{\ensuremath{\mathcal Z}}
\newcommand{\nn}{\ensuremath{\mathsf{n}}}
\newcommand{\pinf}{\ensuremath{{+\infty}}}
\newcommand{\dom}{\ensuremath{\text{\rm dom}\,}}
\newcommand{\card}{\ensuremath{\text{\rm card}\,}}
\newcommand{\zeroun}{\ensuremath{\left]0,1\right[}}  
\newcommand{\rzeroun}{\ensuremath{\left]0,1\right]}}
\newcommand{\ran}{\ensuremath{\text{\rm ran}\,}}
\newcommand{\gra}{\ensuremath{\text{\rm gra}\,}}
\newcommand{\conv}{\ensuremath{\operatorname{conv}}}
\newcommand{\trans}{^{\scriptscriptstyle\top}}
\newcommand{\diag}{\operatorname{diag}}
\newcommand{\W}{{\boldsymbol{Y}}}
\newtheorem{theorem}{Theorem}[section]
\newtheorem{proposition}[theorem]{Proposition}
\newtheorem{assumption}[theorem]{Assumption}
\theoremstyle{plain}{\theorembodyfont{\rmfamily}%
}
\theoremstyle{plain}{\theorembodyfont{\rmfamily}%
\newtheorem{example}[theorem]{Example}}
\theoremstyle{plain}{\theorembodyfont{\rmfamily}%
\newtheorem{remark}[theorem]{Remark}}
\theoremstyle{plain}{\theorembodyfont{\rmfamily}%
}
\theoremstyle{plain}{\theorembodyfont{\rmfamily}%
}
\theoremstyle{plain}{\theorembodyfont{\rmfamily}%
}
\theoremstyle{plain}{\theorembodyfont{\rmfamily}
\newtheorem{fact}[theorem]{Fact}}
\theoremstyle{plain}{\theorembodyfont{\rmfamily}
}
\theoremstyle{plain}{\theorembodyfont{\rmfamily}
\newtheorem{notation}[theorem]{Notation}}
\numberwithin{equation}{section}
\definecolor{labelkey}{rgb}{0,0.08,0.45}
\definecolor{refkey}{rgb}{0,0.6,0.0}
\definecolor{Brown}{rgb}{0.45,0.0,0.05}
\definecolor{dgreen}{rgb}{0.00,0.49,0.00}
\definecolor{dblue}{rgb}{0,0.08,0.75}
\title{\vskip -12mm\sffamily\LARGE 
Learning with Optimal Interpolation Norms\thanks{The work of 
P. L. Combettes was
partially supported by the National Science Foundation under grant 
CCF-1715671, the work of C. A. Micchelli was supported by the
National Science Foundation under grant DMS-1522339, and the work
of M. Pontil was partially supported by the Engineering and
Physical Science Council grant EP/P009069/1.}}
\author{Patrick L. Combettes \\
\small North Carolina State University,
Department of Mathematics\\
\small Raleigh, NC 27695-8205, USA\\
\small \email{plc@math.ncsu.edu} \\[3mm] 
Andrew M. McDonald \\
\small University College London, Department of Computer Science,
London WC1E 6BT, UK\\
\small \email{a.mcdonald@cs.ucl.ac.uk}\\[3mm]
Charles A. Micchelli\\
\small State University of New York,
The University at Albany\\
\small Department of Mathematics and Statistics,
Albany, NY 12222, USA\\
\small \email{charles\_micchelli@hotmail.com} \\[3mm]
Massimiliano Pontil\\
\small Istituto Italiano di Tecnologia,
16163 Genoa, Italy\\
\small \email{massimiliano.pontil@iit.it} \\
\small and\\
\small University College London, Department of Computer Science,
London WC1E 6BT, UK 
}
\date{\ttfamily ~}
\begin{document}

\maketitle
\begin{abstract} 
\noindent 
We analyze a class of norms defined via an optimal interpolation
problem involving the composition of norms and a linear operator.
This construction, known as infimal postcomposition in convex
analysis, is shown to encompass various of norms which have been
used as regularizers in machine learning, signal processing, and
statistics. In particular, these include the latent group lasso,
the overlapping group lasso, and certain norms used for learning
tensors. We establish basic properties of this class of norms and
we provide dual norms. The extension to more general classes of
convex functions is also discussed. A stochastic block-coordinate 
version of the Douglas-Rachford algorithm is devised
to solve minimization problems involving these
regularizers. A prominent feature of the algorithm is that it yields
iterates that converge to a solution in the case of non smooth
losses and random block updates. Finally, we present numerical
experiments with problems employing the latent group lasso penalty.
\end{abstract} 

{\bfseries Key words.} 
Block-coordinate proximal algorithm,
Douglas-Rachford splitting,
infimal postcomposition,
latent group lasso,
machine learning,
optimal interpolation norm.
\newpage

\section{Introduction}
\label{sec:intro}

In various areas of data analysis such as machine learning,
statistics, and signal processing, regularization is a standard
tool used to promote known structures in the solutions of an
optimization problem. Structured sparsity
regularizers have been of particular interest, as a means to
encourage specific sparsity patterns in regression vectors, spectra
of matrices, gradient of images, and signal decompositions.
Important examples include the group lasso \cite{Yuan2006} and 
related norms \cite{Jacob2009-GL,Micc13,Zhao09}, 
spectral regularizers for low rank matrix learning and multitask 
learning \cite{Argyriou2008,Jaggi2010,Srebro2005}, 
multiple kernel learning \cite{Bach2004,Micc07}, 
regularizers for learning tensors \cite{Romera-Paredes2013,Tomi13}. 
Structured sparsity regularizers also arise in 
speech processing \cite{Asae14}, 
high-dimensional inverse problems \cite{Bour14}, 
image decomposition \cite{Liux15},
adaptive image interpolation \cite{Mall10},
face reconstruction \cite{Suny15},
and hyperspectral imaging \cite{Zhan16}.

We introduce a general formulation which captures the above 
norms and allows us to construct new ones via an optimal
interpolation problem involving simpler norms and a linear operator. 
Specifically, given two Banach spaces $\XX$ and $\YY$,
a norm $\|\cdot\|$ on $\YY$ is constructed as
\begin{equation}
\label{e:-1}
(\forall y\in\YY)\quad
\|y\|=\inf_{\substack{x\in \XX\\ Lx=y}}|||F(x)|||,
\end{equation}
where $F\colon\XX\to\RR^m$ is a mapping the components
of which are norms, $|||\cdot|||$ is a norm on $\RR^m$, and 
$L\colon\XX\to\YY$ is a linear operator.
As we shall see, this concise formulation encompasses many classes
of regularizers, either via the norm \eqref{e:-1} 
or the associated dual norm.
In particular, in machine learning, functions of the type 
described in \eqref{e:-1} have been investigated in 
\cite{AMP,Jacob2009-GL,Mich05,Maur12,Tomi13}. This formulation
also arises in image recovery \cite{Jnca14}, in inverse problems
\cite{Chan12}, and in the theory of interpolation spaces 
\cite{Peet70,Trie78}. 

We provide basic 
properties (cf. Proposition~\ref{p:milanojfY6Tr-21}
and Theorem~\ref{p:milanojfY6Tr-22}) and examples of this
construction. These include the overlapping group
lasso, the latent group lasso, and various norms used in tensor
learning problems. We also consider the more general formulation 
\begin{equation} 
\label{e:jfiqo-24}
\varphi(y)=\inf_{\substack{x\in\XX\\ Lx=y}}h\big(F(x)\big),
\end{equation} 
where $h$ and the components of $F$ are convex
functions that satisfy certain properties.
Such constructs are found for instance in 
signal recovery formulations \cite{Jnca14}. In convex analysis,
they are known as infimal postcomposition \cite{Livre1} and 
their importance was first underlined in \cite{Rock70}.

On the numerical side, we shall take advantage of the fact
that the mapping $F$ and the operator $L$ can be composed 
of a large number of ``simple'' components to create complex 
structures. Specifically, we present a general method to solve
optimization problems
involving regularizers of the form \eqref{e:-1} where
$|||\cdot|||=|\cdot|_1$. This method is based on 
the stochastic block-coordinate Douglas-Rachford 
iterative framework of \cite{Siop15,MaPr17}. 
Unlike existing methods, this approach guarantees convergence of
the iterates to a solution 
even when none of the functions present in the model is
differentiable and when random coordinates updates are performed. 

The paper is organized as follows. 
In Section~\ref{sec:norms-definition} we
introduce the general class of regularizers and establish some of
their basic properties. In Section~\ref{sec:examples} we construct a
number of examples of norms within the proposed framework.
In Section~\ref{sec:opt} we present a random block-coordinate
algorithm to solve learning problems involving these regularizers.
Finally, in Section \ref{sec:exps}, we report on numerical
experiments with this algorithm using the latent group lasso
penalty.

{\bfseries Notation.}
We introduce our notation and recall basic concepts from convex 
analysis used throughout the paper; for details see 
\cite{Livre1,Zali02}. Let $\XX$ be a real Banach space, let
$\|\cdot\|$ be its norm, let $\XX^*$ be its topological dual, and
let $\pair{\cdot}{\cdot}$ be the canonical bilinear form on
$\XX\times\XX^*$. If $\XX\neq\{0\}$ and $\XX$ is reflexive, it
follows from James' theorem that the norm of $\XX^*$ is defined by
\begin{equation} 
\label{e:dualnorm}
(\forall x^*\in\XX^*)\quad
\|x^*\|_*=\max_{\substack{x\in\XX\\ \|x\|=1}}\pair{x}{x^*}.
\end{equation}
The space of bounded linear operators from $\XX$ to a Banach space
$\YY$ is denoted by $\BL(\XX,\YY)$.
Let $\varphi\colon\XX\to\RX$. The domain of $\varphi$ is 
$\dom\varphi=\menge{x\in\XX}{\varphi(x)<\pinf}$ and the conjugate 
of $\varphi$ is $\varphi^*\colon\XX^*\to\RXX\colon x^*\mapsto
\sup_{x\in\XX}(\pair{x}{x^*}-\varphi(x))$. $\Gamma_0(\XX)$ denotes
the set of lower semicontinuous convex functions from $\XX$ to
$\RX$ with nonempty domain. If $\XX$ is a Hilbert space, 
the proximity operator of 
$\varphi\in\Gamma_0(\XX)$ at $x\in\XX$ is the unique
minimizer, denoted by $\prox_\varphi x$, of
$\varphi+\|x-\cdot\|^2/2$.
Given $p\in [1,\pinf]$, the $\ell^p$ norm on
$\RR^d$ is denoted by $|\cdot|_p$.
The associated dual norm is 
$|\cdot|_{q}$, where $1/p+1/q=1$.
$\RR_+^m$ and $\RR_{++}^m$ are the positive and strictly
positive $m$-dimensional orthant, respectively, and 
$\RR_-^m =-\RR_+^m$. 

\section{A class of norms}
\label{sec:norms-definition}
We establish the mathematical foundation of our framework, starting 
with a scheme to construct convex functions on $\YY$.

\begin{proposition}
\label{p:milanojfY6Tr-21}
Let $\XX$, $\YY$, and $\KK$ be reflexive real Banach spaces,
let $K$ be a nonempty closed convex cone in $\KK$, 
let $L\colon\XX\to\YY$ be linear and bounded, and
let $F\colon\XX\to\KK$ be $K$-convex in the sense that
\begin{equation}
\label{e:gino}
(\forall\alpha\in\zeroun)(\forall x\in\XX)(\forall y\in\XX)\quad
F\big(\alpha x+(1-\alpha) y\big)-\alpha F(x)-(1-\alpha)F(y)\in K.
\end{equation}
Let $h\colon\KK\to\RX$ be a convex function such that
$\dom h\cap\ran F\neq\emp$ and
\begin{equation}
\label{e:milanojfY6Tr-20}
(\forall z_1\in\ran F)(\forall z_2\in\ran F)\quad
\quad z_1-z_2\in K\quad\Rightarrow\quad h(z_1)\leq h(z_2).
\end{equation}
Define 
\begin{equation}
\label{e:general-primal}
\varphi\colon\YY\to\RXX\colon
y\mapsto\inf_{\substack{x\in\XX\\ Lx=y}}h\big(F(x)\big).
\end{equation}
Then the following hold:
\begin{enumerate}
\item
\label{p:milanojfY6Tr-21i}
$\varphi$ is convex.
\item
\label{p:milanojfY6Tr-21ii}
Suppose that $F$ is continuous, that the cone generated by 
$\ran L^*-\dom(h\circ F)^*$ is a closed vector subspace of $\XX^*$,
and that $h$ is lower semicontinuous.
Then $\varphi\in\Gamma_0(\YY)$.
\end{enumerate}
\end{proposition}
\begin{proof}
Set $f=h\circ F$.

\ref{p:milanojfY6Tr-21i}:
It is enough to show that $f$ is convex, as this will imply that 
$\varphi$ is likewise \cite[Proposition~12.36(ii)]{Livre1}. Let 
$\alpha\in\zeroun$, and let $x$ and $y$ be points in $\XX$. 
Combining \eqref{e:gino} and \eqref{e:milanojfY6Tr-20} yields
\begin{equation}
h\big(F(\alpha x+(1-\alpha) y)\big)
\leq h\big(\alpha F(x)+(1-\alpha)F(y)\big).
\end{equation}
Therefore, by convexity of $h$, we obtain
\begin{equation}
(h\circ F)\big(\alpha x+(1-\alpha)y\big)\leq\alpha(h\circ F)(x)
+(1-\alpha)(h\circ F)(y),
\end{equation}
which establishes the convexity of $f$.

\ref{p:milanojfY6Tr-21ii}:
We first derive from \cite[Lemma~1.28]{Livre1} that $f$ is lower 
semicontinuous. Thus, $f\in\Gamma_0(\XX)$ and, since the cone 
generated by $\ran L^*-\dom f^*$ is a closed vector subspace of 
$\XX^*$, it follows from \cite[Theorem~2.8.3(vii)]{Zali02} and the 
same arguments used in the Hilbertian case in 
\cite[Corollary~25.44(i)]{Livre1} that $\varphi\in\Gamma_0(\YY)$.
\end{proof}

We are now ready to define a class of norms induced by optimal
interpolation, which will be the main focus of the present paper.

\begin{assumption}
\label{a:milanjfY6Tr-22}
$\YY$ is a real Banach space and $m$ is a strictly positive 
integer. For every $j\in\{1,\ldots,m\}$, $\XX_j$ is a reflexive
real Banach space with norm $\|\cdot\|_j$. A generic element in 
$\XX=\XX_1\times\cdots\times\XX_m$ is denoted by 
$x=(x_1,\ldots,x_m)$. Furthermore:
\begin{enumerate}
\item
\label{a:milanjfY6Tr-22i}
$F\colon\XX\to\RR^m\colon x\mapsto(\|x_1\|_1,\ldots,\|x_m\|_m)$.
\item
\label{a:milanjfY6Tr-22ii}
$|||\cdot|||$ is a norm on $\RR^m$ which is monotone in the sense
that
\begin{equation}
\label{e:milanjfY6Tr-22b}
\big(\forall a\in\RR_+^m\big)\big(\forall b\in\RR_+^m\big)\quad
a-b\in\RR_-^m\quad\Rightarrow\quad |||a|||\leq |||b|||.
\end{equation}
\item
\label{a:milanjfY6Tr-22iii}
For every $j\in\{1,\ldots,m\}$, $L_j\in\BL(\XX_j,\YY)$,
$L\colon\XX\to\YY\colon x\mapsto L_1x_1+\cdots+L_mx_m$, and 
$\ran L=\YY$.
\end{enumerate}
Set
\begin{equation}
\label{e:0}
(\forall y\in\YY)\quad
\|y\|=\inf_{\substack{x\in\XX\\ Lx=y}}|||F(x)|||=
\inf_{\substack{x_1\in\XX_1,\ldots,x_m\in\XX_m\\ 
L_1x_1+\cdots+L_mx_m=y}}
~\big|\big|\big|\big(\|x_1\|_1,\ldots,\|x_m\|_m\big)\big|\big|\big|. 
\end{equation}
\end{assumption}

\begin{proposition}
\label{p:comp}
Consider the setting of Assumption~\ref{a:milanjfY6Tr-22} and set
$\nn=|||\cdot|||\circ F$. Then the following hold:
\begin{enumerate}
\item
\label{p:compi}
$\nn$ is a norm on $\XX$.
\item
\label{p:compii}
The dual norm of $\nn$ at $x^*\in\XX^*$ is 
$\nn_*(x^*)=|||(\|x^*_1\|_{1*},\dots,\|x^*_m\|_{m*})|||_*$.
\end{enumerate}
\end{proposition}
\begin{proof}
Let $x\in\XX$ and $x^*\in\XX^*$. 

\ref{p:compi}:
We first deduce from 
Assumption~\ref{a:milanjfY6Tr-22}\ref{a:milanjfY6Tr-22i} that 
\begin{align}
(\forall\alpha\in\RR)\quad
\nn(\alpha x)
&=|||(\|\alpha x_1\|_1,\ldots,\|\alpha x_m\|_m)|||\nonumber\\
&=|||(|\alpha|\,\|x_1\|_1,\ldots,|\alpha|\,\|x_m\|_m)|||\nonumber\\
&=|\alpha|\,|||(\|x_1\|_1,\ldots,\|x_m\|_m)|||\nonumber\\
&=|\alpha|\,|||F(x)|||\nonumber\\
&=|\alpha|\,\nn(x)
\end{align}
and that
\begin{eqnarray}
\nn(x)=0
&\Leftrightarrow&F(x)=0\nonumber\\
&\Leftrightarrow&(\forall j\in\{1,\ldots,m\})\;\;
\|x_j\|_j=0\nonumber\\
&\Leftrightarrow&(\forall j\in\{1,\ldots,m\})\;\; x_j=0\nonumber\\
&\Leftrightarrow& x=0.
\end{eqnarray}
To check the triangle inequality, let $z\in\XX$. By 
Assumption~\ref{a:milanjfY6Tr-22}\ref{a:milanjfY6Tr-22i}, 
$F(x+z)-F(x)-F(z)\in\RR_-^m$. Hence, we 
derive from \eqref{e:milanjfY6Tr-22b} that 
\begin{equation}
\nn(x+z)=|||F(x+z)|||\leq|||F(x)+F(z)|||\leq|||F(x)|||+|||F(z)|||
=\nn(x)+\nn(z).
\end{equation}

\ref{p:compii}:
Suppose that $\nn(x)=1$, set 
$b=(\|x^*_j\|_{j*})_{1\leq j\leq m}$, and observe that
\begin{equation}
\label{e:94fh}
\pair{x}{x^*}=\sum_{j=1}^m\pair{x_j}{x^*_j}\leq\sum_{j=1}^m
\|x_j\|_j\,\|x^*_j\|_{j*}={F(x)}\trans{b}\leq |||b|||_*.
\end{equation}
Taking the supremum over all such vectors $x$, we obtain 
$\nn_*(x^*)\leq|||b|||_*$.
However, by \eqref{e:dualnorm}, since $b\in\RP^m$, there exists 
$a=(\alpha_j)_{1\leq j\leq m}\in\RP^m$ such 
that $|||a|||=1$ and ${a}\trans{b}=|||b|||_*$.
Likewise, for every $j\in\{1,\ldots,m\}$, there exists
$z_j\in\XX_j$ such that $\|z_j\|_j=1$ and
$\|x^*_j\|_{j*}=\pair{z_j}{x^*_j}$. Now set
$\overline{x}=(\overline{x}_j)_{1\leq j\leq m}$, where
$(\forall j\in\{1,\ldots,m\})$ 
$\overline{x}_j=\alpha_jz_j$. Then 
\begin{equation}
\label{e:west4th}
\nn(\overline{x})
=\big|\big|\big|\big(\|\alpha_1z_1\|_1,\ldots,\|\alpha_mz_m\|_m\big)
\big|\big|\big|
=\big|\big|\big|\big(\alpha_1,\ldots,\alpha_m\big)
\big|\big|\big|
=|||a|||
=1
\end{equation}
and therefore 
\begin{align}
\nn_*(x^*)
=\sup_{\substack{w\in\XX\\ \nn(w)=1}}\pair{w}{x^*}
\geq\pair{\overline{x}}{x^*}
=\sum_{j=1}^m\pair{\overline{x}_j}{x^*_j}
=\sum_{j=1}^m\alpha_j\pair{z_j}{x^*_j}
=\sum_{j=1}^m\alpha_j\|x^*_j\|_{j*}
=a\trans b
=|||b|||_*.
\end{align}
We conclude that $\nn_*(x^*)=|||b|||_*$.
\end{proof}

\begin{remark}
\label{r:234c5v}
Let $y\in\YY$ and set $C=\menge{x\in\XX}{Lx=y}$. 
Since $\ran L=\YY$, we have $C\neq\emp$. Now let 
$d_C$ be the distance function to the affine subspace
$C$ associated with the norm 
$\nn=|||\cdot|||\circ F$ (see Proposition~\ref{p:comp}), that is,
\begin{equation}
\label{e:d_C}
(\forall z\in\XX)\quad d_C(z)=\inf_{x\in C}\nn(z-x).
\end{equation}
It follows from \eqref{e:0} that
\begin{equation}
\label{e:q3o94c513}
d_C(0)=\inf_{x\in C}\nn(x-0)=\inf_{x\in C}\nn(x)=\|y\|. 
\end{equation}
Thus, the function $\|\cdot\|$ in \eqref{e:0} is
defined via a minimal norm interpolation process, that is, the
optimization problem underlying \eqref{e:0} is that of
minimizing the norm $\nn$ over the affine subspace $C$. 
Optimal interpolation and, in particular, the problem of finding a
minimal norm interpolant to a finite set of points 
has a long history in approximation theory; see, e.g.,
\cite{Chen00} and the references therein. 
\end{remark}

In the next result we show that the construction described in
Assumption~\ref{a:milanjfY6Tr-22} does provide a norm, and we
compute its dual norm.

\begin{theorem}
\label{p:milanojfY6Tr-22}
Consider the setting of Assumption~\ref{a:milanjfY6Tr-22}.
Then the following hold:
\begin{enumerate}
\item
\label{p:milanojfY6Tr-22i}
$\|\cdot\|$ is a norm on $\YY$.
\item
\label{p:milanojfY6Tr-22ii}
Suppose that $\YY$ is finite-dimensional. Then the dual norm of 
$\|\cdot\|$ at $y^*\in\YY^*$ is 
\begin{equation}
\label{e:dual}
\|y^*\|_*=\big|\big|\big|\big(\|L_1^*y^*\|_{1*},\ldots,
\|L_m^*y^*\|_{m*}\big)\big|\big|\big|_*. 
\end{equation}
\end{enumerate}
\end{theorem}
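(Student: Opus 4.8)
The plan is to set $g=|||\cdot|||\circ F$, which by Proposition~\ref{prop:comp} is a norm on $\R^N$, and to observe that the function defined in \eqref{eqn:h-primal} is the infimal postcomposition $w\mapsto\inf\menge{g(v)}{Bv=w}$ of $g$ by $B$; both assertions then follow from this description together with the dual-norm formula \eqref{eq:dual-comp}. Two preliminary facts will be used throughout: by Assumption~\ref{a:milanjfY6Tr-22}\ref{a:milanjfY6Tr-22iii} the matrix $B$ is surjective, so $C=\menge{v\in\R^N}{Bv=w}$ is nonempty for every $w\in\R^d$; and, as observed in the remark preceding the theorem, the minimum in \eqref{eqn:h-primal} is therefore attained, so $\|\cdot\|$ is real-valued.

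To prove \ref{p:milanojfY6Tr-22i} I would verify the norm axioms. For positive homogeneity, if $\lambda\neq 0$ the substitution $v\mapsto\lambda v$ identifies $\menge{v}{Bv=\lambda w}$ with $\lambda\menge{v}{Bv=w}$, and $g(\lambda v)=|\lambda|\,g(v)$ gives $\|\lambda w\|=|\lambda|\,\|w\|$; the case $\lambda=0$ is immediate from $g(0)=0$. For definiteness, $\|w\|=0$ yields, by attainment, some $v$ with $Bv=w$ and $g(v)=0$, hence $v=0$ (as $g$ is a norm) and $w=0$. For the triangle inequality, choosing minimizers $v_1,v_2$ for $w_1,w_2$ gives $B(v_1+v_2)=w_1+w_2$, so $\|w_1+w_2\|\leq g(v_1+v_2)\leq g(v_1)+g(v_2)=\|w_1\|+\|w_2\|$ by subadditivity of $g$.

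To prove \ref{p:milanojfY6Tr-22ii} I would compute $\|u\|_*$ directly from its definition. Let $U=\menge{w\in\R^d}{\|w\|\leq 1}$ and $V=\menge{v\in\R^N}{g(v)\leq 1}$. Since $\|Bv\|\leq g(v)$ we have $B(V)\subseteq U$; conversely, a minimizer $v$ associated with any $w\in U$ satisfies $g(v)=\|w\|\leq 1$ and $Bv=w$, so $U\subseteq B(V)$, whence $U=B(V)$. Consequently, for every $u\in\R^d$,
\[
\|u\|_*=\max_{w\in U}\scal{w}{u}=\max_{v\in V}\scal{Bv}{u}=\max_{v\in V}\scal{v}{B\trans u}=g_*(B\trans u),
\]
and \eqref{eq:dual-comp} applied with $t=B\trans u$, i.e.\ $t_j=B_j\trans u$, turns the right-hand side into exactly \eqref{e:dual}.

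The argument presents no serious obstacle once Proposition~\ref{prop:comp} is in hand; the only delicate point is the identity $U=B(V)$ in \ref{p:milanojfY6Tr-22ii}, which relies on the attainment of the minimum in \eqref{eqn:h-primal}, and this is precisely where the full-rank hypothesis (ensuring $C\neq\emp$) and the remark enter. If one wishes to bypass attainment, the same conclusion follows from the routine observation that $\|w\|\leq 1$ forces $w\in(1+\varepsilon)B(V)$ for every $\varepsilon>0$, whence $\scal{w}{u}\leq(1+\varepsilon)\,g_*(B\trans u)$ and the estimate follows on letting $\varepsilon\downarrow 0$, the reverse inequality being the trivial inclusion $B(V)\subseteq U$. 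All remaining steps are the homogeneity and monotonicity computations already recorded in the proof of Proposition~\ref{prop:comp}.
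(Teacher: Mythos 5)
Your proof is correct and follows essentially the same route as the paper: both reduce everything to the norm $g=|||\cdot|||\circ F$ from Proposition~\ref{prop:comp}, use attainment of the minimum over the nonempty closed affine set $\menge{v}{Bv=w}$ for definiteness, and obtain \eqref{e:dual} by rewriting $\max\menge{\scal{w}{u}}{\|w\|\leq 1}$ as $\max\menge{\scal{v}{B\trans u}}{g(v)\leq 1}=g_*(B\trans u)$ (your identity $U=B(V)$ is just an explicit phrasing of the paper's change of variable in that maximum). The only cosmetic difference is that you verify the triangle inequality directly by adding minimizers, whereas the paper deduces it from positive homogeneity together with the convexity of $\|\cdot\|$ supplied by Proposition~\ref{p:milanojfY6Tr-21}; both arguments are sound.
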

\begin{proof}
Set $\nn=|||\cdot|||\circ F$ and recall from 
Proposition~\ref{p:comp} that $\nn$ is a norm.

\ref{p:milanojfY6Tr-22i}: 
We first note that, since $\ran L=\YY$, $\dom\|\cdot\|=\YY$.
Next, we derive from \eqref{e:0} that, for every
$y\in\YY$ and every $\alpha\in\RR\smallsetminus\{0\}$,
\begin{equation}
\label{e:milanojfY6Tr-22z}
\|\alpha y\|
=\inf_{\substack{x\in\XX\\Lx=\alpha y}} \nn(x)
=|\alpha|\inf_{\substack{x\in\XX\\ L(x/\alpha)=y}} \nn(x/\alpha)
=|\alpha|\,\|y\|.
\end{equation}
On the other hand, it is clear that $F$ satisfies \eqref{e:gino}
with $K=\RR_-^m$,
that $|||\cdot|||$ satisfies \eqref{e:milanojfY6Tr-20}, and 
that \eqref{e:0} assumes the same form as 
\eqref{e:general-primal}. Hence, by 
Proposition~\ref{p:milanojfY6Tr-21}\ref{p:milanojfY6Tr-21i}, the 
function $\|\cdot\|$ is convex. 
In view of \eqref{e:milanojfY6Tr-22z}, we 
therefore have, for every $(y,z)\in\YY\times\YY$,
$\|y+z\|\leq\|y\|+\|z\|$. 
Now let $y\in\YY$ be such that $\|y\|=0$ and 
set $C=\menge{x\in\XX}{Lx=y}$. Then it follows from 
\eqref{e:q3o94c513} that $d_C(0)=0$ and, since $C$ is closed, we get
$0\in C$. Therefore, $y=L0=0$. Altogether, $\|\cdot\|$ is a norm.

\ref{p:milanojfY6Tr-22ii}: 
Let $y^*\in\YY^*$. Then
\begin{align}
\|y^*\|_*
&=\max\menge{\pair{y}{y^*}}{y\in\YY,\,\|y\|= 1}\nonumber\\
&=\max\Menge{\pair{y}{y^*}}{y\in\YY,\,
\min_{\substack{x\in\XX,\,Lx=y}}\nn(x)=1}\nonumber\\
&=\max\menge{\pair{Lx}{y^*}}{x\in\HH,\,\nn(x)= 1}\nonumber\\
&=\max\menge{\pair{x}{L^* y^*}}{x\in\HH,\,\nn(x)= 1}
\nonumber\\
&=\nn_*(L^*y^*).
\end{align}
We conclude by applying Proposition~\ref{p:comp}.
\end{proof}

We illustrate the construction \eqref{e:0} via two examples.

\begin{example}
\label{ex:peetre}
In Theorem~\ref{p:milanojfY6Tr-22} suppose that $m=2$, that
$\XX_1$ and $\XX_2$ are continuously embedded in the same
topological vector space $\YY$, that $L_1$ and $L_2$ are the
canonical injections, and that $|||\cdot|||=|\cdot|_1$.
Then \eqref{e:0} becomes
\begin{equation}
(\forall y\in\YY)\quad
\|y\|=\min_{\substack{x_1\in\XX_1,x_2\in\XX_2\\ x_1+x_2=y}}
~\big(\|x_1\|_1+\|x_2\|_2\big).
\end{equation}
In other words, $\|\cdot\|$ represents the infimal convolution of
the norms $\|\cdot\|_1$ and $\|\cdot\|_2$. This type of construct
is central is the theory of interpolation spaces
\cite{Peet70,Trie78}. If we replace the $\ell^1$ norm by the 
$\ell^p$ norm for some $p\in\left]1,\pinf\right[$ above, 
we obtain, 
\begin{equation}
(\forall y\in\YY)\quad
\|y\|=\min_{\substack{x_1\in\YY_1,\,x_2\in\YY_2\\ x_1+x_2=y}}
~\big(\|x_1\|^p_1+\|x_2\|^p_2\big)^{1/p}.
\end{equation}
This formulation also arises in the area of interpolation spaces 
\cite{Goul68,Peet70}.
\end{example}

\begin{example}
\label{ex:1}
Let $\HH$ be a real Hilbert space with norm 
$\|\cdot\|_{\HH}$, which is identified with its dual. 
In Theorem~\ref{p:milanojfY6Tr-22} suppose that 
$\XX_1=\cdots=\XX_m=\HH$, let $p$ and $q$ be numbers in 
$]1,\pinf[$ such that $1/p+1/q=1$, and let 
$|||\cdot|||=|\cdot|_p$. Then \eqref{e:0} becomes
\begin{equation}
(\forall y\in\YY)\quad
\|y\|=\left(\min_{\substack{x_1\in\HH,\ldots,x_m\in\HH\\ 
\sum_{j=1}^{m}L_jx_j=y}}\:\sum_{j=1}^m\|x_j\|_{\HH}^p\right)^{1/p}.
\end{equation}
Furthermore, if $\HH$ is finite dimensional, the dual norm at
$y^*\in\YY$ is given by \eqref{e:dual} as
\begin{equation}
\|y^*\|_*=\left(\sum_{j=1}^{m}\big\|L_j^*y^*
\big\|^{q}_{\HH}\right)^{1/q}.
\end{equation}
This construction is discussed in \cite[Theorem~7]{Maur12}.
\end{example}

\begin{remark}
\label{r:milanojfY6Tr-21}
Any norm $\|\cdot\|$ on $\YY$ can trivially be written in the form of
\eqref{e:0} by letting $m=1$, $\XX_1=\YY$, $\|\cdot\|_1=\|\cdot\|$,
$L_1=\Id$, and $|||\cdot|||=|\cdot|$. However,
we are interested in exploiting the structure of the construction
\eqref{e:0} in cases in which the norms $|||\cdot|||$
and $(\|\cdot\|_j)_{1\leq j\leq m}$ are chosen from a ``simple''
class and give rise, via the optimal interpolation problem
\eqref{e:0}, to a ``complex'' norm $\|\cdot\|$. In
particular, when using proximal splitting methods, the
computation of $\prox_{\|\cdot\|}$ will typically not be easy
whereas that of the operators
$(\prox_{\|\cdot\|_j})_{1\leq j\leq m}$ will be.
This will be exploited in Section~\ref{sec:opt} to devise
an efficient block-coordinate splitting algorithm
in the case when $|||\cdot|||=|\cdot|_1$.
\end{remark}

\section{Examples}
\label{sec:examples}

In this section, we observe that the construct presented in 
Assumption~\ref{a:milanjfY6Tr-22} contains in a single framework a 
number of existing regularizers. For simplicity, we
focus on the norms captured by Example~\ref{ex:1}. Our main aim
here is not to derive new regularizers but, rather, to show that
our analysis captures existing ones and to derive their dual norms.

\subsection{Latent group lasso}

\begin{notation}
\label{not:2}
The support of $y=(\eta_i)_{1\leq i\leq d}\in\RR^d$ is 
$\operatorname{supp}(y)=\menge{i\in\{1,\ldots,d\}}{\eta_i\neq 0}$.
For every $\emp\neq G\subset\{1,\dots,d\}$ and $y\in\RR^d$, we set
$r=\card G$ and let $y|_{G}$ denote the vector in $\RR^r$
obtained by retaining the components of $y$ indexed by $G$, i.e.,
$y|_{G}=(\eta_i)_{i\in G}$. Finally, $e_i$ is the $i$th standard
unit vector in $\RR^d$. 
\end{notation}

The example we consider is known as the latent group lasso 
(LGL), or group lasso with overlap, which goes back to 
\cite{Jacob2009-GL}. For every $j\in\{1,\dots,m\}$, fix
$(p_j,q_j) \in[1,\pinf] \times [1,\pinf]$ such that $1/p_j +
1/q_j=1$. Let $(G_j)_{1\leq j\leq m}$ be a covering of 
$\{1,\dots,d\}$ and define the vector space
\begin{equation}
Z=\menge{(z_j)_{1\leq j\leq m}}{(\forall j\in\{1,\ldots,m\})\;
z_j\in\RR^d\;\text{and}\;\operatorname{supp}(z_j)\subset G_j}. 
\end{equation}
The latent group lasso penalty is defined, for $y\in\RR^d$, as 
\begin{equation}
\label{e:latent-group-lasso}
\|y\|_{\textrm{LGL}}=\min\MEnge{\sum_{j=1}^m|z_j|_{p_j}}
{(z_j)_{1\leq j\leq m}\in Z,~{\sum\limits_{j=1}^m z_j=y}}.
\end{equation}
The optimal interpolation problem \eqref{e:latent-group-lasso}
seeks a decomposition of vector $y$ in terms of vectors
$(z_j)_{1\leq j\leq m}$ the support sets of which are restricted 
to the corresponding group of variables in $G_j$. 
If the groups overlap
then the decomposition is not 
necessarily unique, and the variational formulation involves those
$z_j$ for which $\sum_{j=1}^m|z_j|_{p_j}$ is minimal. On the
other hand, if the groups 
are pairwise disjoint, that is $(G_j)_{1\leq j\leq m}$ forms 
a partition of $\{1,\dots,d\}$, the latent group lasso norm 
coincides with the ``standard'' group lasso norm \cite{Yuan2006}, 
which is defined as
\begin{equation}
\label{e:group-lasso}
\big(\forall y\in\RR^d\big)\quad
\|y\|_{\textrm{GL}}=\sum_{j=1}^m\big|y_{|G_j}\big|_{p_j}.
\end{equation}
The norms \eqref{e:latent-group-lasso} 
and \eqref{e:group-lasso} are presented in \cite{Jacob2009-GL} and
\cite{Yuan2006} respectively in the case that $p_1=\cdots=p_m=p$ 
and $q_1=\cdots=q_m=q$. 
In general, \eqref{e:latent-group-lasso} has no closed form
expression due to the overlapping of the groups. However, in
special cases which exhibit additional structure, it can be
computed in a finite number of steps. An important example is
provided by the $(k,p)$-support norm \cite{McDonald2016}, in
which the groups consist of all subsets of $\{1,\dots,d\}$ of
cardinality no greater than $k$, for some $k \in \{1,\dots,d\}$.
The case $p=2$ has been studied in \cite{Argyriou2012} and
\cite{McDonald2014}. 

\begin{example}
\label{ex:iit}
In the above setting, for every $j\in\{1,\dots,m\}$, set
$r_j=\card G_j$. The latent group
lasso penalty \eqref{e:latent-group-lasso} is a norm of the form
\eqref{e:0} with $\YY=\RR^d$, $|||\cdot|||=|\cdot|_1$, and,
for every $j\in\{1,\ldots,m\}$, $\XX_j=\RR^{r_j}$, 
$\|\cdot\|_j=|\cdot|_{p_j}$ and 
$L_j=[e_i~|~i \in G_j]$ is a $d \times r_j$ matrix.
The change of variables $z_j=L_j x_j$ then yields
\eqref{e:latent-group-lasso}.
Furthermore, the dual norm is given by
\begin{equation}
\label{e:kkkk}
\big(\forall y^*\in\RR^d\big)\quad
\|y^*\|_{{\text{LGL}}*}=\max_{1\leq j\leq m}\|y^*_{|G_j}\|_{q_j}.
\end{equation}
This follows from \eqref{e:dual} by noting
that $|||\cdot|||_*=|\cdot|_\infty$ and, 
for all $j\in\{1\ldots,m\}$, $\|\cdot\|_{j*}=|\cdot|_{q_j}$ and
$L_j^*y^*={y^*}_{|G_j}$.
\end{example}

\subsection{Overlapping group lasso}
An alternative generalization of the group lasso norm
\eqref{e:group-lasso} is the overlapping
group lasso \cite{Jenatton2011,Zhao09}, which we denote by
$\|\cdot\|_{\textrm{OGL}}$. It has the same expression as 
\eqref{e:group-lasso}, except that we drop the
restriction that the groups $(G_j)_{1\leq j\leq m}$ form a
partition of $\{1,\dots,d\}$. 
Our next result establishes that the overlapping group lasso
penalty is captured by a dual norm of type \eqref{e:dual}.
We continue to use Notation~\ref{not:2}.

\begin{example}
\label{prop:ogl}
For every $j\in\{1,\dots,m\}$, let $(p_j,q_j) \in[1,\pinf]
\times [1,\pinf]$ be such that $1/p_j+1/q_j=1$. Let
$(G_j)_{1\leq j\leq m}$ be a covering of $\{1,\ldots,d\}$
and let $|||\cdot|||=|\cdot|_\infty$. For every
$j\in\{1,\ldots,m\}$, set $r_j=\card G_j$, 
$\|\cdot\|_{j}=|\cdot|_{q_j}$, and $L_j=[e_i~|~i \in G_j]$.
Then the norm \eqref{e:0} evaluated at $y\in\RR^d$ is
\begin{equation}
\|y\|=\inf\MEnge{\max_{1\leq j\leq m}
|x_{j}|_{q_j}}{ \sum_{j=1}^m L_j x_{j}=y}.
\end{equation}
In addition, since 
$|||\cdot|||_*=|\cdot|_1$ and, for every $j\in\{1,\dots,m\}$,
$\|\cdot\|_{j*}=|\cdot|_{p_j}$ and
$|L_j\trans y^*|_{p_j} =|y^*_{|G_j}|_{p_j}$, 
\eqref{e:dual} yields
$\|\cdot\|_*=\|\cdot\|_{\textrm{OGL}}$.
\end{example}

\begin{remark}
The case $p=\pinf$ corresponds to the iCAP penalty of
\cite{Zhao09}. We can also consider other choices for the
matrices $(L_j)_{1\leq j\leq m}$.
For example, an appropriate choice gives 
various total variation penalties \cite{MSX}. 
A further example is obtained 
by choosing $m=1$ and $L_1$ to be the incidence 
matrix of a graph, a setting which is has been considered in the
context of semi-supervised learning~\cite{Herb09}. In
particular, for $p=1$, this corresponds to the fused lasso penalty
\cite{fused}.
\end{remark}

\begin{remark}
We have seen that the norm \eqref{e:0} captures the
latent group lasso when $|||\cdot|||=|\cdot|_1$, while the dual
norm \eqref{e:dual} captures the overlapping group lasso when
$|||\cdot|||=|\cdot|_\infty$. A natural extension of either
setting is to choose $|||\cdot|||$ to be the $k$ Ky-Fan norm for
some $k \in \{1,\dots,m\}$. In other words we choose, for every 
$x\in\RR^m$, $|||x|||= \sum_{j=1}^k |x|^\downarrow_j$, where 
$|x|^\downarrow \in \RR^m$ is the vector obtained by
reordering the components of $x$ so that they are decreasing in
absolute value. Note that, if $k=1$, then
$|||\cdot|||=|\cdot|_\infty$ and, if $k=m$, then
$|||\cdot|||=|\cdot|_1$.
\end{remark}

\subsection{Polyhedral norms}
A norm on $\YY=\RR^d$ is polyhedral (or is a block-norm) if its 
closed unit ball $B$ is a polyhedron, i.e., a finite intersection of 
closed affine half-spaces. Examples of polyhedral norms used in
data processing can be found in \cite{McDonald2016,Zeng14,Zhao09}. 
In this case, $B$ is bounded, symmetric
with respect to the origin, and it has a finite, even number of 
extreme points. Let us recall a couple of useful facts.

\begin{fact}{\rm\cite[Theorem~1]{Ward85}}
\label{f:1}
Let $\|\cdot\|$ be a polyhedral norm on $\YY=\RR^d$ and let 
$(b_j)_{1\leq j\leq m}$ be the extreme points of its closed unit
ball. Then
\begin{equation}
\label{e:31}
(\forall y\in\YY)\quad\|y\|=\min_{\substack{(\xi_j)_{1\leq j\leq m}
\in\RR^m\\
\sum_{j=1}^m \xi_j b_j=y}}\;\sum_{j=1}^m |\xi_j|.
\end{equation}
\end{fact}

\begin{fact}{\rm\cite[Theorem~2]{Ward85}}
\label{f:2}
Let $\|\cdot\|$ be a polyhedral norm on $\YY=\RR^d$, let $B$ be its
closed unit ball, and let 
$B^\odot=\menge{y^*\in\RR^d}{(\forall y\in B)\;\;y\trans{y^*}\leq 1}$
be the polar set of $B$. Then $B^\odot$ is a bounded polyhedron
and, if $(b_j^*)_{1\leq j\leq r}$ denote its extreme points, 
\begin{equation}
\label{e:32}
\big(\forall y\in\RR^d\big)\quad\|y\|=
\max_{1\leq j\leq r}y\trans b_j^*.
\end{equation}
\end{fact}

It follows from Fact~\ref{f:1} that polyhedral norms are 
special cases of \eqref{e:0}. Indeed, \eqref{e:31} is derived from 
\eqref{e:0} by choosing $|||\cdot|||=|\cdot|_1$ and $(\forall
j\in\{1,\ldots,m\})$ $\XX_j=\RR$, $\|\cdot\|_j=|\cdot|$, and
$L_j\colon\xi\mapsto\xi b_j$. In addition, since a linear 
function on a nonempty compact convex set attains its maximum at 
an extreme point of the set \cite[Corollary~32.3.2]{Rock70}, the
dual norm is given by
\begin{equation}
\label{e:33}
\big(\forall y^*\in\RR^d\big)\quad\|y^*\|_*
=\max_{y\in B}y\trans y^*
=\max_{1\leq j\leq m}b_j\trans y^*.
\end{equation}
Note that \eqref{e:33} is the dual counterpart of \eqref{e:32}.

\subsection{$\Theta$-norms}

Assume that $\YY=\RR^d$. Families of norms parameterized by a
nonempty, convex, and bounded set $\Theta\subset\RR^d_{++}$ were 
considered in \cite{Bach2012,McDonald2014,Micc13}. As
shown in \cite[Proposition~2]{Macd16}, the expressions 
\begin{equation}
\label{e:ciccio1}
\big(\forall y\in\RR^d\big)\quad\|y\|_{\Theta}
=\inf_{\theta\in\Theta}\sqrt{\scal{y}{\diag(\theta)^{-1}y}}
\end{equation}
and
\begin{equation}
\label{e:ciccio2}
\big(\forall y^*\in\RR^d\big)\quad\|y^*\|_{\Theta*}
=\sup_{\theta\in\Theta}\sqrt{\scal{y^*}{\diag(\theta)y^*}}
\end{equation}
define dual norms. 
Examples of norms which are included in this 
family are the $\ell^p$-norms, and the $k$-support norm mentioned 
above \cite{Argyriou2012}. 
Next, we relate $\Theta$-norms to our framework.

\begin{proposition}
\label{prop:theta}
Let $(\theta_j)_{1\leq j\leq m}$ be vectors in $\RR^d_{+}$ and
let $\Theta$ be a subset of~$\RR^d_{++}$ such that
$\overline{\Theta}=\conv\{\theta_{1},\dots,\theta_{m}\}$. Then
the norms defined in \eqref{e:ciccio1} and
\eqref{e:ciccio2} can be written in the form \eqref{e:0}
and \eqref{e:dual} respectively, with $|||\cdot|||=|\cdot|_1$
and $(\forall j\in\{1,\ldots,m\})$
$\|\cdot\|_j=|\cdot|_2$ and $L_j=\diag(\sqrt{\theta_{j}})$, where
the $\sqrt{\cdot}$ operator is understood componentwise. 
\end{proposition}
\begin{proof}
We have $|||\cdot|||_*=|\cdot|_{\infty}$ and
$(\forall j\in\{1,\ldots,m\})$ $\|\cdot\|_{j*}=|\cdot|_2$. 
Now set $(\forall j\in \{1,\ldots,m\})$
$L_j=\diag(\sqrt{\theta_{j}})$ and $\XX_j=\RR^d$. 
Then we derive from \eqref{e:dual} that
\begin{align}
\big(\forall y^*\in\RR^d\big)\quad 
\|y^*\|^2_* 
&=\max_{1\leq j\leq m} | L_j\trans y^*|^2_2
\nonumber\\ 
&=\max_{1\leq j\leq m}{\scal{y^*}{\diag(\theta_{j})y^*}}
\nonumber\\
&=\max_{\theta\in\overline{\Theta}}
{\scal{y^*}{\diag(\theta)y^*}}\label{e:att} \\
&=\sup_{\theta\in\Theta}{\scal{y^*}{\diag(\theta)y^*}},
\end{align}
where equality \eqref{e:att} results from the fact that, in 
$\RR^d$, a linear 
function on a nonempty compact convex set attains its maximum at 
an extreme point of the set \cite[Corollary~32.3.2]{Rock70}. This
establishes \eqref{e:ciccio2}. As noted above, \eqref{e:ciccio2} is 
the dual of \eqref{e:ciccio1}. It follows that  \eqref{e:ciccio1} is 
of the form \eqref{e:0}.
\end{proof}

\subsection{Tensor norms}
A number of regularizers have been proposed to learn low
rank tensors; see
\cite{Gandy11,Romera-Paredes2013,Signoretto14,Tomi11,Wima14}
and the references therein. In this section, we discuss two 
prominent examples that fit our framework. We
first recall some notions from multilinear algebra \cite{Kolda}.
Let $\W\in\RR^{d_1\times\cdots\times d_m}$ be an $m$-mode 
real tensor, that is,
\begin{equation}
\W=\left[Y_{i_1,\dots,i_m}\right]_{1\leq i_1\leq d_1,\ldots,
1\leq i_m\leq d_m}.
\end{equation}
Now let $j\in\{1,\dots,m\}$. A mode-$j$ fiber is a vector
composed of elements of $\W$ obtained
by fixing all indices except those corresponding to the $j$th.
Set $r_j=\prod_{k \neq j} d_k$. The mode-$j$ matricization
$M_j(\W)$ of a tensor $\W$ is the $d_j \times r_j$ matrix
obtained by arranging the mode-$j$ fibers of $\W$ such that each 
of them forms a column of $M_j(\W)$. By way of example, a 3-mode
tensor $\W\in\RR^{3\times 4\times 2}$ admits the matricizations: 
$M_1(\W)\in\RR^{3\times 8}$, $M_2(\W)\in\RR^{4\times 6}$, and 
$M_3(\W)\in\RR^{2\times 12}$. Note that $M_j\colon\RR^{d_1
\times\cdots\times d_m} \to \RR^{d_j \times r_j}$ is a
linear operator. Its adjoint
$M_j^*\colon\RR^{d_j \times r_j}
\to \RR^{d_1 \times\cdots\times d_m}$ is the reverse
matricization along mode $j$. 

Recall that the nuclear norm (or trace norm) of a matrix,
$\|\cdot\|_{\textrm{nuc}}$, is the sum of its singular values. Its
dual norm is the spectral norm $\|\cdot\|_{\textrm{sp}}$ which
provides the largest singular value. The overlapped nuclear norm
\cite{Romera-Paredes2013,Tomi11} is defined as the sum of the
nuclear norms of the mode-$j$ matricizations, namely
\begin{align}
\label{e:overlapped-trace-norm}
\big(\forall \W\in\RR^{d_1 \times \cdots \times d_m}\big)\quad
\Vert \W
\Vert_{\textrm{ONN}}=\sum_{j=1}^m \Vert M_j(\W)
\Vert_{\textrm{nuc}}.
\end{align}

\begin{example}
\label{ex:onn}
Let $\YY=\RR^{d_1 \times \cdots \times d_m}$ and 
$|||\cdot|||=|\cdot|_\infty$. In addition, for every 
$j\in\{1,\dots,m\}$, set $\XX_j=\RR^{d_j \times r_j}$,
$\|\cdot\|_{j}=\|\cdot\|_{\textrm{sp}}$, and $L_j=M_j^*$.
Then \eqref{e:0} becomes
\begin{equation}
\big(\forall \W\in\RR^{d_1\times\cdots\times d_m}\big)\quad
\|\W\|=\inf\MEnge{\max_{1\leq j\leq m}
\|X_{j}\|_{\textrm{sp}}}{\sum_{j=1}^m M_j^* X_{j}=\W}.
\end{equation}
In addition, since 
$|||\cdot|||_*=|\cdot|_1$ and, for every $j\in\{1,\dots,m\}$,
$\|\cdot\|_{j*}=\|\cdot\|_{\textrm{nuc}}$ and
$L_j^* =M_j$, equation~\eqref{e:dual} yields 
$\|\cdot\|_*=\|\cdot\|_{\textrm{ONN}}$.
\end{example}

Now let $\{\alpha_j\}_{1\leq j\leq m}\subset\RPP$. The
scaled latent nuclear norm is defined by (see \cite{Tomi13} and
\cite{Wima14} for special cases)
\begin{equation}
\big(\forall \W\in\RR^{d_1\times\cdots\times d_m}\big)\quad\Vert\W
\Vert_{\textrm{LNN}}=\inf\MEnge{\sum_{j=1}^m 
\frac{1}{\alpha_j}\|X_{j}\|_{\textrm{nuc}}}
{\sum_{j=1}^m M_j^* X_j=\W}.
\label{e:latent-trace-norm}
\end{equation}
Our next example captures this norm.

\begin{example}
\label{ex:slnn}
The latent nuclear norm \eqref{e:latent-trace-norm} is of the form
\eqref{e:0} with $\YY=\RR^{d_1 \times \cdots \times d_m}$,
$|||\cdot|||=|\cdot|_1$ and, for every $j\in\{1,\dots,m\}$,
$\XX_j= \RR^{d_j \times r_j}$, $\|\cdot\|_{j}=
\|\cdot\|_{\textrm{nuc}}/\alpha_j$, and $L_j=M_j^*$. 
Furthermore, \eqref{e:dual} yields the dual norm 
\begin{equation}
\big(\forall \W^*\in\RR^{d_1 \times \cdots \times d_m}\big)
\quad\Vert\W^*\Vert_{\textrm{LNN}*}=\max_{1\leq j \leq m} 
\alpha_j \Vert M_j(\W^*)\Vert_{\textrm{sp}}.
\end{equation}
\end{example}

\section{Random block-coordinate algorithm}
\label{sec:opt}

\subsection{Overview}
The purpose of this section is to address some of the numerical 
aspects associated with the class of norms introduced in 
Assumption~\ref{a:milanjfY6Tr-22} in the case when
$|||\cdot|||=|\cdot|_1$, which reduces \eqref{e:0} to
\begin{equation}
\label{e:10}
(\forall y\in\YY)\quad
\|y\|=\inf_{\substack{x_1\in\XX_1,\ldots,x_m\in\XX_m\\ 
L_1x_1+\cdots+L_mx_m=y}}
~\|x_1\|_1+\cdots+\|x_m\|_m. 
\end{equation}
Since such norms are 
nonsmooth convex functions, they could in principle be handled via 
their proximity operators in the context of proximal splitting
algorithms \cite{Livre1,Siop13}. However, the
proximity operator of the composite norm $\|\cdot\|$ in 
\eqref{e:10} is usually intractable, which makes this direct 
approach unviable. We circumvent this problem by formulating 
the problem in such a way that it involves only the proximity 
operators of the norms $(\|\cdot\|_j)_{1\leq j\leq m}$, which will 
typically be available in closed form. 

The main features of the algorithmic approach we propose are the
following:
\begin{itemize}
\item
It can handle general nonsmooth formulations: the functions 
present in the model need not be differentiable.
\item
It adapts the recent approach proposed in \cite{Siop15,MaPr17} to 
devise a block-coordinate algorithm which allows us to select
arbitrarily the blocks of norms $(\|\cdot\|_j)_{1\leq j\leq m}$ to
be activated over the course of the iterations. This makes the
method amenable to the processing of very large data sets in a
flexible manner by adapting the computational load of each
iteration to the available computing resources.
\item
The computations are broken down to the evaluation of simple
proximity operators of the norms $(\|\cdot\|_j)_{1\leq j\leq m}$ 
and of those appearing in the loss function, while the linear
operators $(L_j)_{1\leq j\leq m}$ are applied separately.
\item
Knowledge of the norms of the operators $(L_j)_{1\leq j\leq m}$
is not required.
\item
Convergence of the iterates to a solution of the
minimization problem under consideration is guaranteed.
\end{itemize}

\subsection{Problem formulation}
We consider the standard linear problem in which a vector
$\overline{y}$ in a real Hilbert space $\YY$ is to be inferred
from  $n$ noisy linear observations
\begin{equation}
\label{e:genn23}
\begin{cases}
\beta_1&=\pair{\overline{y}}{a^*_1}+\zeta_1\\
&~\vdots\\
\beta_n&=\pair{\overline{y}}{a^*_n}+\zeta_n,
\end{cases}
\end{equation}
where $(a^*_i)_{1\leq i\leq n}\in(\YY^*)^n$ are known and 
$(\zeta_i)_{1\leq i\leq n}\in\RR^n$ model unknown perturbations.
This model captures various problems in 
supervised learning and in inverse problems. A common variational 
formulation associated with \eqref{e:genn23} is the 
regularized convex minimization problem 
\begin{equation}
\label{e:jfY6Tr-31o}
\minimize{y\in\YY}{\sum_{i=1}^n\ell_i(\pair{y}{a^*_i},\beta_i)+
\lambda\|y\|},
\end{equation}
where $\|\cdot\|$ is a norm fulfilling 
Assumption~\ref{a:milanjfY6Tr-22} with $|||\cdot|||=|\cdot|_1$ 
(see \eqref{e:10}), $\lambda\in\RPP$ is a regularization parameter,
and, for every $i\in\{1,\dots,n\}$ and every $\beta\in\RR$,
$\ell_i(\cdot,\beta)\in\Gamma_0(\RR)$. We also assume that each
$\XX_j$ is finite-dimensional and can be equipped with a norm
$\|\cdot\|_{\textrm{j}}$ that makes it a Euclidean space. We
designate by $\HH$ the Euclidean space obtained by renorming
$\XX=\XX_1\times\cdots\times\XX_m$ with the norm
$x=(x_j)_{1\leq j\leq m}\mapsto
\sqrt{\sum_{j=1}^m\|x_j\|_{j}^2}$. 
In this setting, \eqref{e:jfY6Tr-31o} becomes
\begin{equation}
\label{e:jfY6Tr-31a}
\minimize{\substack{y\in\YY\\ x_1\in\XX_1,\ldots,x_m\in\XX_m\\
\sum_{j=1}^mL_jx_j=y}}
{\Sum_{i=1}^n\ell_i(\pair{y}{a^*_i},\beta_i)
+\lambda\sum_{j=1}^m\|x_j\|_j}.
\end{equation}
One can therefore first obtain a solution 
$(x_j)_{1\leq j\leq m}$ to the problem 
\begin{equation}
\label{e:kalas}
\minimize{\substack{x_1\in\XX_1,\ldots,x_m\in\XX_m}}
{\Sum_{i=1}^n\ell_i\Bigg(\Sum_{j=1}^m\pair{L_jx_j}{a^*_i},\beta_i
\Bigg)+\lambda\sum_{j=1}^m\|x_j\|_j}
\end{equation}
and set $y=\sum_{j=1}^mL_jx_j$ to obtain a solution
to \eqref{e:jfY6Tr-31a}.
To make the structure of \eqref{e:kalas} more apparent, let us
introduce the functions
\begin{equation}
\label{e:9hfe6-01b}
\Phi\colon\XX\to\RX\colon x\mapsto\lambda\sum_{j=1}^m\|x_j\|_j
\end{equation}
and
\begin{equation}
\label{e:9hfe6-01a}
\Psi\colon\RR^m\to\RX\colon(\eta_1,\ldots,\eta_n)\mapsto
\sum_{i=1}^n\psi_i(\eta_i),
\end{equation}
where, for every $i\in\{1,\ldots,n\}$,
\begin{equation}
\psi_i\colon\RR\to\RX\colon\eta_i\mapsto\ell_i(\eta_i,\beta_i).
\end{equation}
Let us also define 
\begin{equation}
A\colon\YY\to\RR^n\colon y\mapsto(\pair{y}{a^*_i})_{1\leq i\leq n}
\quad\text{and}\quad
B\colon\HH\to\RR^n\colon x\mapsto\sum_{j=1}^mB_jx_j,
\end{equation}
where, for every $j\in\{1,\ldots,m\}$,
\begin{equation}
B_j=AL_j\in\BL(\XX_j,\RR^n).
\end{equation}
Then, recalling from Assumption~\ref{a:milanjfY6Tr-22} that 
$L\colon\HH\to\YY\colon x\mapsto\sum_{j=1}^mL_jx_j$, we have 
$B=AL\in\BL(\HH,\RR^n)$ and we can thus rewrite
\eqref{e:jfY6Tr-31a} as
\begin{equation}
\label{e:9hfe6-03a}
\minimize{x\in\HH}\Phi(x)+\Psi(Bx).
\end{equation}
Note that our hypotheses imply that $\Phi\in\Gamma_0(\HH)$ and 
$\Psi\in\Gamma_0(\RR^n)$.

\subsection{Douglas-Rachford splitting in a product space}
We work in the direct Hilbert sum $\HHH=\HH\oplus\RR^n$.
Let us introduce the functions 
\begin{equation}
\begin{cases}
\boldsymbol{F}\colon\HHH\to\RX\colon
(x,r)\mapsto\Phi(x)+\Psi(r)\\\
\boldsymbol{G}=\iota_{\boldsymbol{V}},\quad\text{where}\quad 
\boldsymbol{V}=\gra B=\menge{(x,r)\in\HHH}{Bx=r}. 
\end{cases}
\end{equation}
Using the variable $\boldsymbol{x}=(x,r)$, we reduce
\eqref{e:9hfe6-03a} to the problem 
\begin{equation}
\label{e:9hfe6-03d}
\minimize{\boldsymbol{x}\in\HHH} 
\boldsymbol{F}(\boldsymbol{x})+\boldsymbol{G}(\boldsymbol{x})
\end{equation}
involving the sum of two functions in $\Gamma_0(\HHH)$ 
and which can be solved with the Douglas-Rachford algorithm
\cite[Section~27.2]{Livre1}. Let $\boldsymbol{y}_0\in\HHH$,
let $\gamma\in\RPP$, and let
$(\mu_k)_{k\in\NN }$ be a sequence
in $\left]0,2\right[$ such that
$\sum_{k\in\NN}\mu_k(2-\mu_k)=\pinf$.
The Douglas-Rachford algorithm 
\begin{equation}
\label{e:mer-egee07-15}
\begin{array}{l}
\text{for}\;k=0,1,\ldots\\
\left\lfloor
\begin{array}{l}
\boldsymbol{x}_{k}=\prox_{\gamma\boldsymbol{G}}\boldsymbol{y}_k\\
\boldsymbol{z}_{k}=
\prox_{\gamma\boldsymbol{F}}(2\boldsymbol{x}_{k}-\boldsymbol{y}_k)
\\
\boldsymbol{y}_{k+1}=\boldsymbol{y}_k+
\mu_k(\boldsymbol{z}_{k}-\boldsymbol{x}_{k})
\end{array}
\right.\\[2mm]
\end{array}
\end{equation}
produces a sequence $(\boldsymbol{x}_k)_{k\in\NN}$ which converges 
to a solution to \eqref{e:9hfe6-03d} 
\cite[Corollary~27.4]{Livre1}. However,
by \cite[Proposition~24.11 and Example~29.19(i)]{Livre1},
\begin{equation}
\label{e:9hfe6-03e}
\prox_{\boldsymbol{F}}\colon (x,r)\mapsto
\big(\prox_{\Phi}x,\prox_{\Psi}r\big),
\end{equation}
and
\begin{equation}
\label{e:9hfe6-03f}
\prox_{\boldsymbol{G}}\colon (u,s)\mapsto (x,Bx),
\quad\text{where}\;\;
x=u-B^*(\Id+BB^*)^{-1}(Bu-s)
\end{equation}
is the projection operator onto $\boldsymbol{V}$. Hence, 
upon setting $R=B^*(\Id+BB^*)^{-1}$, we can rewrite
\eqref{e:mer-egee07-15} as
\begin{equation}
\label{e:9hfe6-03g}
\begin{array}{l}
\text{for}\;k=0,1,\ldots\\
\left\lfloor
\begin{array}{l}
q_k=Bu_k-s_k\\
x_{k}=u_k-Rq_k\\
r_{k}=Bx_{k}\\
v_{k}=\prox_{\gamma{\Phi}}(2x_{k}-u_k)\\
t_{k}=\prox_{\gamma{\Psi}}(2r_{k}-s_k)\\
u_{k+1}=u_{k}+\mu_k(v_{k}-x_{k})\\
s_{k+1}=s_{k}+\mu_k(t_{k}-r_{k}),
\end{array}
\right.\\[2mm]
\end{array}
\end{equation}
where we have set 
$\boldsymbol{x}_k=(x_k,r_k)$,
$\boldsymbol{y}_k=(u_k,s_k)$, and
$\boldsymbol{z}_k=(v_k,t_k)$.
It follows from the above result that $(x_k)_{k\in\NN}$ converges
to a solution to \eqref{e:9hfe6-03a}. Let us now express 
\eqref{e:9hfe6-03g} in terms of the original variables of 
problem \eqref{e:kalas}.
To this end set, for every $j\in\{1,\ldots,m\}$, 
\begin{equation}
\label{e:Rj}
R_j=B_j^*(\Id+BB^*)^{-1}
=B_j^*\bigg(\Id+\sum_{j=1}^mB_jB_j^*\bigg)^{-1}.
\end{equation}
Moreover, let us denote 
by $x_{j,k}\in\XX_j$ the $j$th component of $x_{k}$,
by $u_{j,k}\in\XX_j$ the $j$th component of $u_{k}$, and
by $v_{j,k}\in\XX_j$ the $j$th component of $v_{k}$.
Furthermore, we denote by $\rho_{i,k}\in\RR$ the $i$th component of
$r_k$, by $\sigma_{i,k}\in\RR$ the $i$th component of $s_k$,
and by $\tau_{i,k}\in\RR$ the $i$th component of $t_k$. 
Then \eqref{e:9hfe6-03g} becomes
\begin{equation}
\label{e:9hfe6-03i}
\begin{array}{l}
\text{for}\;k=0,1,\ldots\\
\left\lfloor
\begin{array}{l}
q_k=\sum_{j=1}^mB_ju_{j,k}-s_k\\
\text{for}\;j=1,\ldots,m\\
\left\lfloor
\begin{array}{l}
x_{j,k}=u_{j,k}-R_jq_k\\
u_{j,k+1}=u_{j,k}+\mu_k\big(\prox_{\gamma\lambda\|\cdot\|_j}
(2x_{j,k}-u_{j,k})-x_{j,k}\big)\\
\end{array}
\right.\\[2mm]
r_{k}=\sum_{j=1}^mB_jx_{j,k}\\
\text{for}\;i=1,\ldots,n\\
\left\lfloor
\begin{array}{l}
\sigma_{i,k+1}=\sigma_{i,k}+\mu_k\big(\prox_{\gamma{\psi_i}}
(2\rho_{i,k}-\sigma_{i,k})-\rho_{i,k}\big).
\end{array}
\right.\\[2mm]
\end{array}
\right.\\[2mm]
\end{array}
\end{equation}
In large-scale problems, a possible drawback of this approach is 
that $m+n$ proximity operators must be evaluated at each iteration,
which can lead to impractical implementations in terms of
computations and/or memory requirements. The analysis of 
\cite[Corollary~5.5]{Siop15} shows that the proximity 
operators in \eqref{e:9hfe6-03i} can be sampled by sweeping
through the indices in $\{1,\ldots,m\}$ and $\{1,\ldots,n\}$
randomly while preserving the convergence of the iterates. 
This results in partial updates of the variables which lead to
significantly lighter iterations and remarkable flexibility in
the implementation of the algorithm. Thus, a variable 
$u_{j,k}$ is updated at iteration $k$ depending on whether a 
random activation variable $\varepsilon_{j,k}$ takes on the value
$1$ or $0$ (each component $\sigma_{i,k}$ of the vector $s_k$ 
is randomly updated according to the same strategy). 
The method resulting from this random sampling scheme is 
presented in the next theorem.

\begin{theorem}
\label{t:9hfe6-04}
Let $\mathsf{D}=\{0,1\}^{m+n}\smallsetminus\{\mathsf{0}\}$,
let $\gamma\in\RPP$, let $(\mu_k)_{k\in\NN}$ be a sequence in 
$\left]0,2\right[$ such that 
$\inf_{k\in\NN}\mu_k>0$ and $\sup_{k\in\NN}\mu_k<2$, 
let $(u_{j,0})_{1\leq j\leq m}\in\HH$,
let $s_0=(\sigma_{i,0})_{1\leq i\leq n}\in\RR^n$, and let 
$(\boldsymbol{\varepsilon}_{k})_{k\in\NN}=
(\varepsilon_{1,k}\ldots,\varepsilon_{m+n,k})_{k\in\NN}$ be 
identically distributed $\mathsf{D}$-valued random variables
such that, for every $i\in\{1,\ldots,m+n\}$,
$\operatorname{Prob}[\varepsilon_{i,0}=1]>0$. Iterate
\begin{equation}
\label{e:A}
\begin{array}{l}
\text{for}\;k=0,1,\ldots\\
\left\lfloor
\begin{array}{l}
q_k=\sum_{j=1}^mB_ju_{j,k}-s_k\\
\text{for}\;j=1,\ldots,m\\
\left\lfloor
\begin{array}{l}
x_{j,k}=u_{j,k}-R_jq_k\\[1mm]
u_{j,k+1}=u_{j,k}\,+\varepsilon_{j,k}\mu_k
\big(\prox_{\gamma\lambda \|\cdot\|_j}
(2x_{j,k}-u_{j,k})-x_{j,k}\big)\\[2mm]
\end{array}
\right.\\[2mm]
r_{k}=\sum_{j=1}^mB_jx_{j,k}\\
\text{for}\;i=1,\ldots,n\\
\left\lfloor
\begin{array}{l}
\sigma_{i,k+1}=\sigma_{i,k}\,+\varepsilon_{m+i,k}\mu_k
\big(\prox_{\gamma{\psi_i}}(2\rho_{i,k}-\sigma_{i,k})
-\rho_{i,k}\big).
\end{array}
\right.\\[2mm]
\end{array}
\right.
\end{array}
\end{equation}
Suppose that the random sequences 
$(\boldsymbol{\varepsilon}_{k})_{k\in\NN}$ and 
$(u_k,s_k)_{k\in\NN}$ are independent. Then, 
for every $j\in\{1,\ldots,m\}$, $(x_{j,k})_{k\in\NN}$
converges almost surely to a vector $x_j$ and 
$y=\sum_{j=1}^mL_jx_j$ is a solution to \eqref{e:jfY6Tr-31o}.
\end{theorem}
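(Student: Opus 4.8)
The plan is to recognize \eqref{e:A} as an instance of the stochastic block-coordinate Douglas--Rachford algorithm of \cite[Corollary~5.5]{Siop15} applied to the two-function splitting \eqref{e:9hfe6-03d}, and then to translate its convergence guarantee back through the sequence of reformulations carried out in Section~\ref{sec:opt}. First I would verify that the deterministic recursion \eqref{e:9hfe6-03i} is exactly the rewriting of \eqref{e:mer-egee07-15}, i.e.\ of Douglas--Rachford applied to $\boldsymbol{F}+\boldsymbol{G}$ on $\RR^{N+n}$, using the product structure \eqref{e:9hfe6-03e} of $\prox_{\gamma\boldsymbol{F}}$ and the projection formula \eqref{e:9hfe6-03f} for $\prox_{\gamma\boldsymbol{G}}$, together with the componentwise structure $\prox_{\gamma\Phi}\colon(v_j)_j\mapsto(\prox_{\gamma\lambda f_j}v_j)_j$ and $\prox_{\gamma\Psi}\colon(\sigma_i)_i\mapsto(\prox_{\gamma\psi_i}\sigma_i)_i$. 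Each $f_j$ is a norm, hence in $\Gamma_0(\RR^{p_j})$, so $\Phi\in\Gamma_0(\RR^N)$; each $\psi_i$ is real-valued convex, hence in $\Gamma_0(\RR)$, so $\Psi\in\Gamma_0(\RR^n)$; thus $\boldsymbol{F}\in\Gamma_0(\RR^{N+n})$ and $\boldsymbol{G}=\iota_{\boldsymbol{V}}\in\Gamma_0(\RR^{N+n})$ since $\boldsymbol{V}$ is a (nonempty) closed subspace.

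Next I would check the qualification condition ensuring $\zer(\partial\boldsymbol{F}+\partial\boldsymbol{G})\neq\emp$ and that the iteration converges to a point in that set: since $\dom\boldsymbol{G}=\boldsymbol{V}$ is a subspace and $\boldsymbol{F}$ is finite-valued on all of $\RR^{N+n}$, one has $0\in\sri(\dom\boldsymbol{F}-\dom\boldsymbol{G})$, so the subdifferential sum rule applies and minimizers of \eqref{e:9hfe6-03d} — equivalently solutions of \eqref{e:9hfe6-03a}, which exist because $\Phi+\Psi(L\cdot)$ is coercive (the loss terms are bounded below and $\lambda\sum_j f_j(v_j)\to+\infty$ as $\|v\|\to+\infty$) and lsc — are exactly the zeros of $\partial\boldsymbol{F}+\partial\boldsymbol{G}$. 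Then I would invoke \cite[Corollary~5.5]{Siop15}: under the stated hypotheses on $(\mu_k)_{k\in\NN}$, on the i.i.d.\ activation variables $(\boldsymbol{\varepsilon}_k)_{k\in\NN}$ with $\operatorname{Prob}[\varepsilon_{i,0}=1]>0$ for each coordinate block, and on the independence of $(\boldsymbol{\varepsilon}_k)_{k\in\NN}$ from $(x_k,y_k)_{k\in\NN}$, the block-coordinate randomization \eqref{e:A} of the deterministic scheme \eqref{e:9hfe6-03i} produces primal iterates $(v_k)_{k\in\NN}=((v_{j,k})_{1\leq j\leq m})_{k\in\NN}$ that converge almost surely to some $v=(v_j)_{1\leq j\leq m}$ with $\Phi(v)+\Psi(Lv)$ minimal; here I must match the block partition (the $m$ blocks $v_j$ for $\Phi$ and the $n$ blocks $\eta_i$ for the dual variable attached to $\Psi$, with the $s$-update performed in full as an intermediate quantity) to the block structure required by that corollary.

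Finally I would propagate the conclusion backward: $v=(v_j)_j$ solving \eqref{e:9hfe6-03a} solves \eqref{e:jfY6Tr-31b}, hence $w=\sum_{j=1}^m B_j v_j$ together with $(v_j)_j$ solves \eqref{e:jfY6Tr-31a}; and since \eqref{e:jfY6Tr-31a} is merely \eqref{e:jfY6Tr-31o} with the constraint $w=\sum_j B_j v_j$ made explicit and $\|w\|$ replaced by its defining infimum \eqref{eqn:h-primal}, any such $w$ is a solution to \eqref{e:jfY6Tr-31o}. Note $L_j v_j = A B_j v_j$, so $w=\sum_j L_j v_j$ is the corresponding point in $\ran A$ used implicitly through $L$; writing the claim as $w=\sum_j L_j v_j$ is consistent once one tracks that the decision variable recovered is $\sum_j B_j v_j$ and the loss depends on it only through $A(\sum_j B_j v_j)=\sum_j L_j v_j$. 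I expect the main obstacle to be the bookkeeping in this last translation — verifying that the specific form of the $x$-, $v$-, $s$-, and $\eta$-updates in \eqref{e:A}, with the half-iterates $2v_{j,k+1}-x_{j,k}$ and $2\sigma_{i,k+1}-\eta_{i,k}$ and the relaxation factors $\mu_k$, is precisely the instance of \cite[Corollary~5.5]{Siop15} obtained by expanding \eqref{e:9hfe6-03g}--\eqref{e:9hfe6-03i} block by block, and that no block is inadvertently required to be activated every iteration (in particular that the full computation of $s_{k+1}$ from the already-updated $v_{j,k+1}$ does not violate the block-coordinate structure, since $s$ is not an independent primal block but a dependent quantity feeding the $\eta_i$-updates).
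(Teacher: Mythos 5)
Your proposal is correct and follows essentially the same route as the paper, whose entire proof is a direct appeal to \cite[Corollary~5.5]{Siop15} to get almost sure convergence of $(v_{1,k},\ldots,v_{m,k})_{k\in\NN}$ to a solution of \eqref{e:jfY6Tr-31b}, followed by the observation that the reconstructed $w$ then solves \eqref{e:jfY6Tr-31o}; your additional verifications (product structure of the proximity operators, the qualification condition, and the bookkeeping on the $s$-update) only make explicit what the paper leaves implicit. Your remark that the recovered decision variable is really $\sum_{j}B_jv_j$ (with $L_j=AB_j$ entering only through the loss) correctly identifies what appears to be a typo in the theorem statement.
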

\begin{proof}
It follows from \cite[Corollary~5.5]{Siop15} that 
$(x_{1,k},\ldots,x_{m,k})_{k\in\NN}$ converges almost surely to 
a solution to \eqref{e:kalas}. In turn, $y$ solves 
\eqref{e:jfY6Tr-31o}.
\end{proof}

\begin{remark}
The operators $(R_j)_{1\leq j\leq m}$ of \eqref{e:Rj} are computed
off-line only once and they intervene in algorithm \eqref{e:A}
only via matrix-vector multiplications.
\end{remark}

\begin{remark}
\label{r:u7r}
It follows from the result of \cite{Siop15} that, under suitable
qualification conditions, the conclusions of
Theorem~\ref{t:9hfe6-04} remain true for a general choice of 
the functions $f_j\in\Gamma_0(\XX_j)$ instead of $\|\cdot\|_j$,
and when $L$ does not have full rank. 
This allows us to solve the more general versions of 
\eqref{e:jfY6Tr-31a} in which the regularizer is not a norm 
but a function of the form \eqref{e:general-primal}.
\end{remark}

\section{Numerical experiments}
\label{sec:exps}
In this section we present numerical experiments applying the
random sweeping stochastic block algorithm outlined in 
Section~\ref{sec:opt} 
to sparse problems in which the regularization penalty is a norm
fitting our framework, as described in
Assumption~\ref{a:milanjfY6Tr-22}.
The goal of these experiments is to show concrete applications of
the class of norms discussed in the paper and to illustrate the
behavior of the proposed random block-iterative proximal splitting
algorithm. Let us stress that these appear to be the first
numerical experiments on this kind of block-coordinate method for
completely nonsmooth optimization problems with converging sequence
of iterates.

The setting we consider is binary classification with the hinge
loss and a latent group lasso penalty \cite{Jacob2009-GL}. Each
data matrix $A \in \RR^{n \times d}$ is generated with i.i.d.
Gaussian entries and each row $a_i$ of $A$ is normalized to have
unit $\ell^2$ norm. Similarly, the true model vector $y\in\RR^d$
is sparse and its nonzero entries are generated randomly on the unit 
$\ell^2$ sphere. The $n$ observations are then obtained as
$\beta_i=\textrm{sign}(\scal{a_i}{y})$. To induce classification
errors, a randomly chosen subset
of the observations have their sign reversed, with the value of
the noise determining the size of the subset, expressed as a
percentage of the total observations. For the implementation of
the algorithm, we require the proximity operators of the
functions $\|\cdot\|_j$ and $\psi_i$ in Theorem~\ref{t:9hfe6-04}. In
this case, these are the $\ell^2$-norm and the hinge loss, which
have straightforward proximity operators \cite{Livre1}.

In large-scale applications it is not possible to activate all the
functions and all the blocks due to computing and memory
limitations. The random sweeping algorithm \eqref{e:A} allows us 
to activate only some of the blocks by toggling the activation 
variables $(\varepsilon_{j,k})_{1\leq j\leq m}$ and
$(\varepsilon_{m+i,k})_{1\leq i\leq n}$.
The vectors are updated only when the corresponding activation 
variable is equal to $1$; otherwise it is equal to $0$ and no
update takes place. In our experiments we always activate the
variables $(\varepsilon_{m+i,k})_{1\leq i\leq n}$ as they are
associated to the set of training points which
is small in sparsity regularization problems. On the other hand,
only a fraction $\alpha$ of the variables
$(\varepsilon_{j,k})_{1\leq j\leq m}$ are activated,
which is achieved by sampling, at each iteration $k$, a subset of
$\lceil m\alpha\rceil$ distinct indices in $\{1,\dots,m\}$. In
light of Theorem~\ref{t:9hfe6-04}, convergence of the iterates is
guaranteed for every $\alpha\in\rzeroun$. It is natural to ask to
what extent these partial updates slow down the algorithm with
respect to the hypothetical fully updated version in which
sufficient computational power and memory are available. 

To investigate this question, in our first experiment 
$A\in\RR^{1000\times 10000}$, the true model vector 
$y$ has sparsity $95\%$, and we apply a $25\%$ 
classification error rate. The relaxation parameters
$(\mu_k)_{k\in\NN}$ are all set to $1.99$, the proximal parameter
$\gamma$ is set to $0.01$, and the regularization parameter
$\lambda$ is set to $0.1$. As a stopping rule for the algorithm
we use
\begin{equation}
\frac{|x_{k+1}-x_k|_2}{|x_k|_2}\leq 10^{-6}.
\end{equation}
We employ the chain latent group lasso penalty, whereby
the groups define contiguous sequences of length 10, with an
overlap of length 3, and the number of groups is 1429. 
Table~\ref{tab:hinge-loss-iterations} presents the time, number 
of iterations, and number of iterations normalized by the activation 
rate for the hinge loss and latent group lasso penalty for values
of the activation parameter 
$\alpha$ in $\{0.1, 0.2, \ldots, 1.0\}$. The normalized iteration
numbers are obtained by multiplying the actual iteration number
by the activation rate $\alpha$ in order to fairly quantify the 
global computational effort. Indeed, scaling the iterations by
the activation rate allows for a fair comparison between regimes
since the computational load of the algorithm per iteration is
proportional to the number of activated blocks. 
We observe that, while the absolute number of
iterations naturally increases as the activation rate decreases, 
the normalized number of iterations is remarkably
stable across the different regimes.
Thus, in large scale problems in which memory
space and processing power are limited, the standard optimization
algorithm \eqref{e:9hfe6-03i} with full activation rate is 
not suitable, whereas our random sweeping procedure can be 
easily implemented. 
Interestingly, Table~1 indicates that the normalized number of
iterations is not affected.
Figure~\ref{fig:objectives-square-hinge} depicts (top) the 
objective values for the problem and (bottom) 
the distance to the limiting solution for 
various activation rates. 
We note that the paths are similar for all activation rates, 
and the convergence is similarly fast.
This reinforces our findings that partial activation of the
blocks does not lead to any deterioration in normalized 
performance. 
\begin{figure}
\begin{center}
\includegraphics[width=0.625\columnwidth]%
{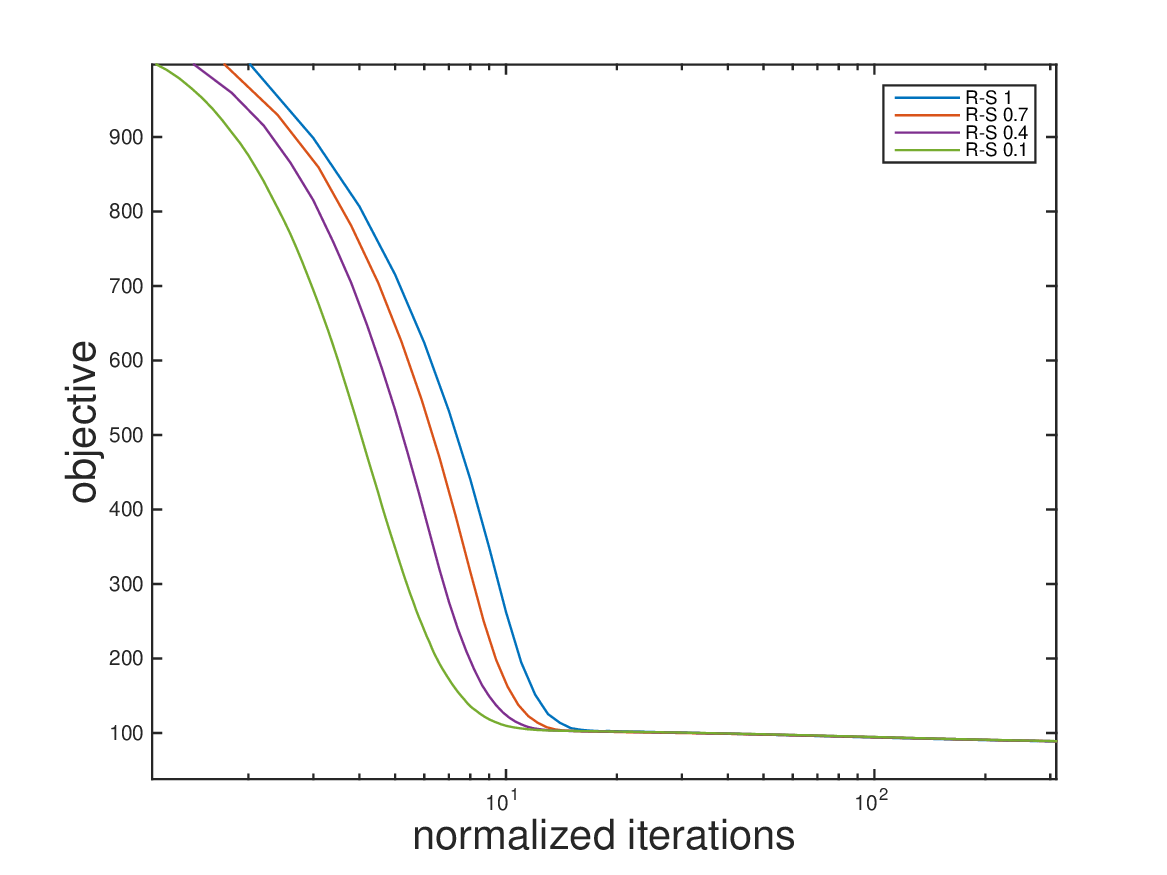}
\includegraphics[width=0.625\columnwidth]%
{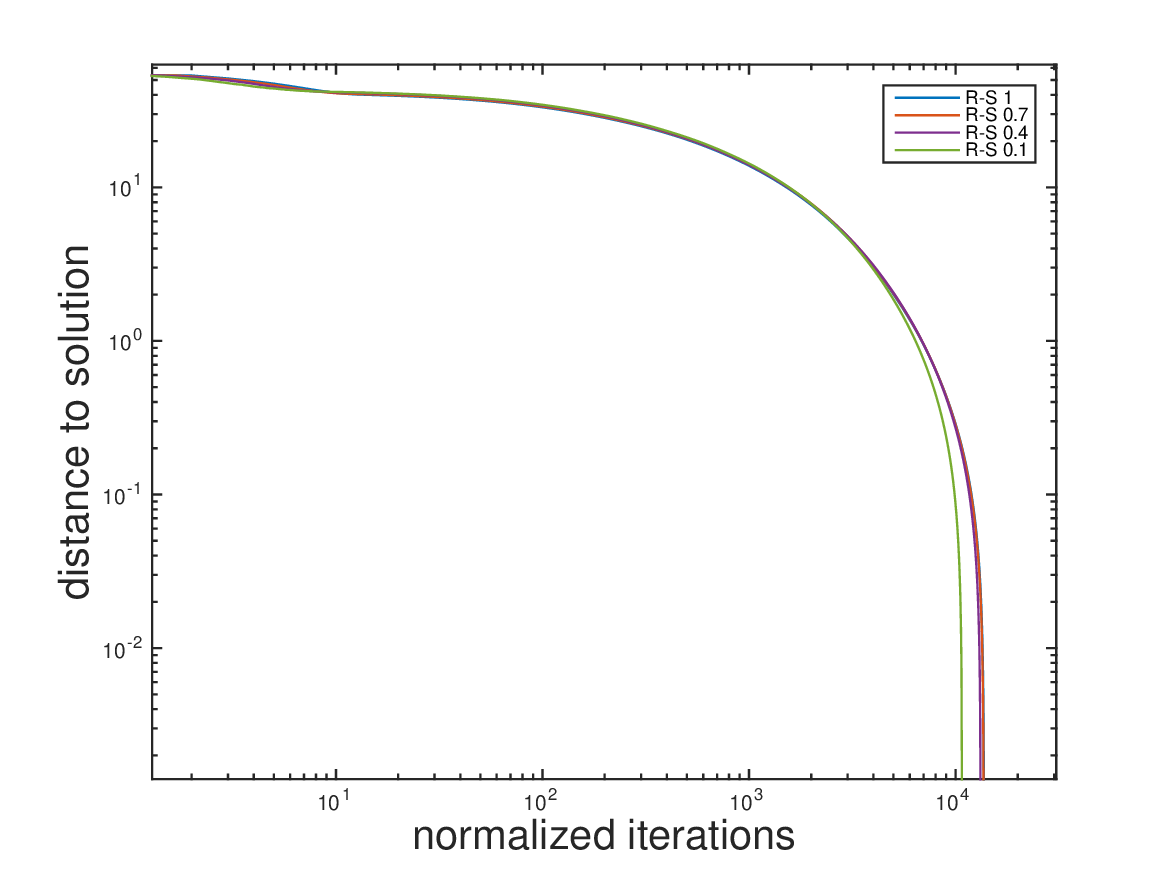}
\caption{Objective for hinge loss classification with the latent
group lasso (top), and distance to solution for the same
(bottom).}
\label{fig:objectives-square-hinge}
\end{center}
\end{figure}

As a second experiment, we revisit the above problem using
the $k$-support norm penalty of \cite{Argyriou2012}, which is a
special case of Example~\ref{ex:iit}. Here, 
$A\in\RR^{20\times 25}$, $k=4$, and $\alpha\in
\{0.005, 0.01, 0.05, 0.1, 0.2, 0.4, 0.6, 0.8, 1.0 \}$. 
The number of groups is $m=d!/(k!(d-k)!)=12650$.
Even in this relatively small size problem, the number 
of groups $m$ considerably exceeds both $d$ and $n$.
Table~\ref{tab:ksup-iterations} shows the same metrics as the
first experiment. We again observe that performance is
stable as $\alpha$ varies. 

\begin{table}[t]
\caption{Time, iterations, and normalized iterations 
for hinge loss classification with the latent group lasso. $A \in
\RR^{1000 \times 10000}$, $m=1429$.}
\vskip 0.3cm
\label{tab:hinge-loss-iterations}
\centering
\setlength\tabcolsep{4pt}
\begin{minipage}[th]{0.9\columnwidth}
\centering
\begin{small}
\begin{tabular}{lccc}
\toprule
activation & time (s) & actual & normalized  \\
 rate & & iterations & iterations \\ \midrule
1.0 & 24912 & 14515 & 14515 \\
0.9 & 24517 & 16047 & 14443 \\
0.8 & 26124 & 18080 & 14464 \\
0.7 & 28975 & 20633 & 14443 \\
0.6 & 28223 & 23872 & 14323 \\
0.5 & 29304 & 28308 & 14154 \\
0.4 & 36983 & 34392 & 13757 \\
0.3 & 38100 & 44080 & 13224 \\
0.2 & 50484 & 62664 & 12533 \\
0.1 & 62213 & 100829 & 10083 \\
\bottomrule
\end{tabular}
\end{small}
\end{minipage}
\end{table}

\begin{table}[t]
\caption{Time, iterations, and normalized iterations 
for hinge loss classification with the $k$-support norm. $A \in
\RR^{20 \times 25}$, $k=4$, $m=12650$.}
\vskip 0.3cm
\label{tab:ksup-iterations}
\centering
\setlength\tabcolsep{4pt}
\begin{minipage}[th]{0.9\columnwidth}
\centering
\begin{small}
\begin{tabular}{lccc}
\toprule
activation & time (s) & actual & normalized  \\
rate & & iterations & iterations \\
\midrule
1.0 & 388 & 463 & 463 \\
0.8 & 446 & 681 & 544 \\
0.6 & 454 & 894 & 536 \\
0.4 & 482 & 1281 & 512 \\
0.2 & 423 & 2557 & 511 \\
0.1 & 469 & 4851 & 485 \\
0.05 & 1054 & 9402 & 470 \\
0.010 & 1625 & 41633 & 416 \\
0.005 & 1907 & 77066 & 385 \\
\bottomrule
\end{tabular}
\end{small}
\end{minipage}
\end{table}

\newpage

\end{document}